\documentclass[12pt]{article}

\usepackage[paper=a4paper,left=30mm,right=30mm,top=30mm,bottom=30mm]{geometry}
\usepackage{algorithm}
\usepackage{algpseudocodex}

\usepackage{amsmath}
\usepackage{amssymb} 
\usepackage{amsthm} 
\usepackage{thmtools} 

\usepackage[shortlabels]{enumitem}
    \setlist{nosep}
    \setlist[enumerate,1]{label={(\roman*)}}
\usepackage{todonotes}
\usepackage[font=footnotesize,margin=1cm]{caption}
\usepackage{subcaption}
\usepackage{graphicx}
    \graphicspath{{./code/results/}}

\usepackage{ltablex} 

\ifluatex 
    \usepackage{fontspec}
    \defaultfontfeatures{Ligatures={TeX}}
\else
    \usepackage[utf8]{inputenc}
    \usepackage[T2A,T1]{fontenc}
\fi

\usepackage{mathtools}
\usepackage{microtype}
\usepackage{orcidlink}

\usepackage{pgfplots}
\usepackage{tikz}
    \pgfplotsset{compat=newest}
    \usetikzlibrary{plotmarks}
    \usetikzlibrary{calc}
    \usetikzlibrary{positioning}
    \usetikzlibrary{arrows}
    \usetikzlibrary{arrows.meta}
    \usetikzlibrary{graphs}
    \usetikzlibrary{matrix}
    \usepgfplotslibrary{patchplots}
     
\usepackage{xspace}
\usepackage{xcolor} 
\usepackage{cite}
\usepackage{hyperref}
\usepackage[nameinlink]{cleveref}

\theoremstyle{plain}
\newtheorem{theorem}{Theorem}

\newtheorem{definition}[theorem]{Definition}
\newtheorem{example}[theorem]{Example}

\newtheorem{proposition}[theorem]{Proposition}

\newtheorem{scheme}[theorem]{Scheme}
\newtheorem{assumption}[theorem]{Assumption}

\definecolor{darkgreen}{RGB}{15, 135, 0}

\newcommand{\norm}[1]{\Vert #1 \Vert}

\newcommand{\RR}{\mathbb{R}}
\newcommand{\NN}{\mathbb{N}}
\newcommand{\RRnonneg}{[0,\infty)}

\newcommand{\interv}{\mathbb{I}}

\newcommand{\und}{,~}

\DeclareMathOperator{\Span}{span}

\newcommand{\Uad}{\mathcal{U}_{\mathrm{ad}}}

\newcommand{\tp}{^{\hspace{-1pt}\mathsf{T}}}

\newcommand{\mtp}{^{-\mathsf{T}}}

\AtBeginDocument{\renewcommand{\tilde}{\widetilde}}
\AtBeginDocument{\renewcommand{\hat}{\widehat}}

\newcommand{\dt}{\,\mathrm{d}t}

\newcommand{\ds}{\,\mathrm{d}s}

\newcommand{\ddtheta}{\frac{\mathrm{d}}{\mathrm{d}\theta}}

\newcommand{\portHamiltonian}{port-Ha\-mil\-to\-ni\-an\xspace}

\renewcommand{\vec}[1]{\begin{bmatrix} #1 \end{bmatrix}}
\newcommand{\textvec}[1]{[\begin{smallmatrix} #1 \end{smallmatrix}]}

\newcommand{\domain}{\mathcal{D}}
\newcommand{\B}{g} 
\newcommand{\D}{k} 
\newcommand{\Ucal}{\mathcal{U}}
\newcommand{\ham}{\mathcal{H}}
\renewcommand{\eta}{\nabla\ham}

\newcommand{\Qbf}{\mathbf{Q}} 
\newcommand{\Sbf}{\mathbf{S}} 
\newcommand{\Rbf}{\mathbf{R}} 

\newcommand{\fbar}{\overline{f}}
\newcommand{\etabar}{\overline{\nabla}\ham}
\newcommand{\Bbar}{\overline{\B}}
\newcommand{\Dbar}{\overline{\D}}

\newcommand{\ubar}{\overline{u}}
\newcommand{\ybar}{\overline{y}}
\newcommand{\ellbar}{\overline{\ell}}
\newcommand{\ntimesteps}{q}
\newcommand{\gammabar}{\overline{\gamma}}
\newcommand{\Wbar}{\overline{W}}
\newcommand{\matrixproj}{\mathcal{P}}
\newcommand{\dgfixpoint}{\mathsf{F}}
\newcommand{\dgnonzeroset}{\mathsf{M}}
\newcommand{\eye}{I}

\newcommand{\lebesgue}{L}

\newcommand{\hamc}{\hat{\mathcal{H}}}
\newcommand{\etac}{\nabla\hamc}
\newcommand{\yc}{\hat{y}}

\newcommand{\error}{\mathcal{E}}

\newcommand{\errorenergyabs}{\error_{\mathrm{energy}}^{\mathrm{abs}}}

\newcommand{\errorstatenodal}{\error_{\mathrm{state,nodal}}}

\pgfplotsset{
  custom y log style/.style={
      yticklabel={
        \pgfkeys{/pgf/fpu=true}
        \pgfmathparse{exp(\tick)}
        \pgfmathprintnumber[fixed,fixed zerofill, precision=2]{\pgfmathresult}
        \pgfkeys{/pgf/fpu=false}
      }
  }
}

\allowdisplaybreaks

\hypersetup{
        colorlinks=false,
        pdfborder={0,0,0},
        pdfborderstyle={/S/U/W 0},
        }

\title{A discrete gradient scheme for preserving QSR-dissipativity}

\author{
    Attila Karsai\,\orcidlink{0000-0001-9905-433X}\thanks{\texttt{\{karsai,pschulze\}@math.tu-berlin.de}}\,\,\thanks{Institute of Mathematics, Technische Universität Berlin, Germany},
    Philipp Schulze\,\orcidlink{0000-0002-7299-4628}\footnotemark[1]\,\,\footnotemark[2]\,\,\thanks{Institute of Mathematics, University of Potsdam, Germany}
}

\providecommand{\keywords}[1]
{
  \small	
  \textbf{Keywords:} #1\\
}

\providecommand{\ams}[1]
{
  \small	
  \textbf{AMS subject classifications:} #1
}

\date{\today}

\colorlet{g}{black!40!green}
\colorlet{r}{black!30!red}
\colorlet{b}{black!20!blue}
\definecolor{y}{RGB}{253,229,65}
\colorlet{o}{black!10!orange}
\colorlet{p}{black!30!purple}

\allowdisplaybreaks 

\begin{document}

\renewcommand{\hbar}{\overline{h}}

\maketitle

\begin{abstract}
	The notion of dissipative dynamical systems provides a formal description of processes that cannot generate energy internally.
    For these systems, changes in energy can only occur due to an external energy supply or dissipation effects. 
    Unfortunately, dissipative properties tend to deteriorate in numerical computations, especially in nonlinear systems. 
    Discrete gradient methods can help mitigate this problem. 
    In this paper, we present a class of structure-preserving time discretization schemes based on discrete gradients for a special class of systems that are dissipative with respect to a quadratic supply rate.
\end{abstract}
\keywords{nonlinear systems, dissipative systems, time discretization, structure preservation, quadratic supply rate}
\\\noindent\ams{37M15, 65P10, 93-10, 93C10}

\section{Introduction}

Dynamical systems arising from physical or other applications typically feature some relevant qualitative properties which are worth to be preserved when discretizing the system in time.
Examples for such properties are stability, passivity, or conserved quantities.
In general, time discretization does not necessarily preserve these properties, which may lead to unphysical results, such as planets leaving their orbit, cf.~\cite[Sec.~I.2]{hairer06-geometric}.
Often the qualitative properties are related to an algebraic or geometric structure of the dynamical system and, therefore, structure-preserving time discretization schemes may be used for the preservation of such properties.
Examples for relevant structures are the Hamiltonian \cite{arnold89-mathematical}, gradient \cite{hirsch74-differential}, GENERIC \cite{grmela97-dynamics,oettinger97-dynamics}, or port-Hamiltonian \cite{vanderschaft14-port} ones.
Among these, port-Hamiltonian systems are especially relevant for this paper since this structure ensures passivity, which is a special case of QSR-dissipativity, i.e., dissipativity w.r.t.~a quadratic supply rate.

In this paper we propose a time discretization scheme for a certain class of nonlinear QSR-dissipative systems, which ensures that a time-discrete analogue of the time-continuous power balance is satisfied.
To this end, we employ discrete gradients, which are also used, e.g., for the structure-preserving discretization of Hamiltonian or port-Hamiltonian systems, cf.~\cite{gonzalez96-time,mclachlan99-geometric,celledoni17-energy,kinon26-discrete}.
Other structure-preserving time discretization schemes for port-Hamiltonian systems are based on Petrov--Galerkin projections \cite{morandin24-modeling,giesselmann25-energy-consistent} or Gauss collocation methods \cite{kotyczka19-discrete-time,mehrmann19-structure-preserving}.
For further structure-preserving schemes for Hamiltonian, gradient, and GENERIC systems, we refer to \cite{hairer06-geometric,juengel19-structure-preserving,egger21-energy,oettinger18-generic,juengel22-minimizing-movements} and the references therein.
While structure-preserving schemes for port-Hamiltonian systems allow obtaining time-discrete power balances for passive systems, there are currently no such time discretization schemes available for the more general class of QSR-dissipative systems.

The main contributions of this paper are listed in the following.
\begin{itemize}
	\item We propose a class of time discretization schemes for special QSR-dissipative systems with equal input and output dimensions, cf.~\Cref{scheme:dg-qsr}.
		To this end, we first demonstrate that the considered class of QSR-dissipative systems admits a special structure on the time-continuous level involving the gradient of the storage function, cf.~\cite{ortega02-interconnection,wang03-generalized} for similar ideas in the context of port-Hamiltonian systems.
		The proposed time discretization schemes enforce a corresponding structure on the time-discrete level via discrete gradients and suitable projections.
        We show that solutions of the time-discrete system satisfy a time-discrete power balance, cf.~\Cref{prop:discr:dg-qsr-powerbalance}.
	\item We provide sufficient conditions for the time discretization scheme to be locally well-posed and second-order accurate, cf.~\Cref{thm:discr:qsr-dg-convergence}.
	\item We illustrate the approach by means of several numerical examples of QSR-dissipative systems and observe second order accuracy as well as the satisfaction of the time-discrete power balance up to the precision of the nonlinear system solver applied in every time step, cf.~\Cref{sec:numerical_experiments}.
\end{itemize}

The rest of this paper is structured as follows.
In \Cref{sec:problem_setting} we present the problem setting and some relevant definitions and preliminary results on QSR-dissipative systems.
\Cref{sec:time_discretization} is dedicated to the proposed time discretization scheme and its properties, and in \Cref{sec:numerical_experiments} we illustrate the approach by means of multiple numerical experiments.

\paragraph{Notation}
Throughout this manuscript, the following notation is used.
We denote the identity matrix of size $n\times n$ by $\eye_n$, and omit the subscript~$n$ if the size is clear from the context.
Moreover, $\norm{\cdot}_{\RR^n}$ denotes the Euclidean norm, and similarly the subscript $\RR^n$ is omitted if the dimension of the space is clear from the context.
The set of scalar multiples of the vector $v\in\RR^n$ is written as $\Span \lbrace v\rbrace$, and $A\tp$ refers to the transpose of $A \in \RR^{n,m}$.
The sets of continuous and $k$-times continuously differentiable functions from~$[0,T]$, $T>0$ to~$V\subset\RR^n$ are denoted by $C([0,T],V)$ and $C^k([0,T],V)$, respectively.
Furthermore, $\partial_i$ denotes the partial derivative w.r.t.~the $i$-th variable, and we write $\dot z$ for the time derivative of $z \colon \interv \to \RR^n$, $\interv \subseteq \RR$.

\section{Problem setting}\label{sec:problem_setting}

\subsection{Dynamical systems}

In this paper, we consider nonlinear finite-dimensional dynamical system of the form 
\begin{equation}\label{eq:moylan-system}
    \begin{aligned}
        \dot{z} & = f(z) + \B(z) u, ~~ z(0) = z_0 \\
        y & = h(z) + \D(z) u.
    \end{aligned}
\end{equation}
Here,~$f \colon \RR^n \to \RR^n$,~$\B \colon \RR^n \to \RR^{n,m}$,~$h \colon \RR^n \to \RR^m$,~$\D \colon \RR^n \to \RR^{m,m}$ model the dynamics and are assumed continuous, and~$z_0 \in \RR^n$ is the initial condition.
Moreover, the function $u \in \Ucal = C([0,\infty), \RR^m)$ models external control inputs.
We make the following assumption on well-posedness of~\eqref{eq:moylan-system} throughout the manuscript. 
Here, a classical solution is a continuously differentiable function $z$ which satisfies the differential equation in~\eqref{eq:moylan-system} for all~$t\in[0,T]$, $T>0$.

\begin{assumption}\label{as:well-posedness}
	For any $u\in\Ucal$ and $z_0\in\RR^n$, the system \eqref{eq:moylan-system} has a unique classical solution on the time horizon~$[0,T]$,~$T>0$.
\end{assumption}

\subsection{Dissipativity}
Energy properties of the system~\eqref{eq:moylan-system} can be modeled as introduced in the seminal work~\cite{willems72-dissipative1}.
The precise definition, adapted to our setting, reads as follows.

\begin{definition}[dissipative systems~\cite{willems72-dissipative1}]\label{def:willems-dissipative}
    Consider the system~\eqref{eq:moylan-system}.
    In addition to \Cref{as:well-posedness}, let $s \colon \RR^m \times \RR^m \to \RR$ be such that the integral~$\int_{t_0}^{t_1} s(u(t), y(t)) \dt$ is well-defined for all timepoints~$0 \leq t_0 \leq t_1 \leq T$, initial values~$z(t_0) \in \RR^n$ and controls~$u \in \Ucal$.
    Then, the system is called \emph{dissipative with respect to the supply rate~$s$} if there exists $\ham \colon \RR^n \to \RRnonneg$ satisfying
    \begin{equation}\label{eq:model:willems-dissipation-inequality}
        \ham(z(t_1)) 
        \leq 
        \ham(z(t_0)) 
        + 
        \int_{t_0}^{t_1}  
            s(u(t), y(t))
        \dt
    \end{equation}
    for all~$0 \leq t_0 \leq t_1$,~$z(t_0) \in \RR^n$ and~$u \in \Ucal$.
    In that case, we call $\ham$ a \emph{storage function} and \eqref{eq:model:willems-dissipation-inequality} the corresponding \emph{dissipation inequality}.
\end{definition}

Loosely speaking, systems dissipative in the sense of \Cref{def:willems-dissipative} are associated with an energy functional that can only increase due to interaction with the environment via inputs and outputs.
Let us shortly comment on some important special cases of \Cref{def:willems-dissipative}.
First, if~$\theta\mapsto\ham(z(\theta))$ is assumed to be differentiable, then the dissipation inequality~\eqref{eq:model:willems-dissipation-inequality} is equivalent to
\begin{equation}\label{eq:willems-differential-dissipation-inequality}
    \ddtheta \ham(z(\theta)) \Big|_{\theta = t} \leq s(u(t), y(t))
\end{equation} 
for all~$t\in[0,T]$,~$z_0 \in \RR^n$ and~$u \in \Ucal$.
Second, if the amount of dissipated energy is known, then the inequalities~\eqref{eq:model:willems-dissipation-inequality} and~\eqref{eq:willems-differential-dissipation-inequality} can be modified to hold with equality, leading to the notion of \emph{dissipation rates}.

\begin{definition}[dissipation rate~\cite{willems72-dissipative1}]\label{def:willems-dissipation-rate}
    In addition to the assumptions of \Cref{def:willems-dissipative}, let~$d \colon \RR^n \times \RR^m \to \RRnonneg$ be such that~$\int_{t_0}^{t_1} d(z(t), u(t)) \dt$ is well-defined for all~$0\leq t_0 \leq t_1$,~$z(t_0) \in \RR^n$ and~$u \in \Ucal$.
    If for all~$0\leq t_0 \leq t_1$,~$z(t_0) \in \RR^n$ and~$u \in \Ucal$ the \emph{energy balance}
    \begin{equation}\label{eq:willems-energy-balance}
        \ham(z(t_1)) 
        =
        \ham(z(t_0)) 
        + 
        \int_{t_0}^{t_1}  
            s(u(t), y(t))
            -
            d(z(t), u(t))
        \dt
    \end{equation}
    is satisfied, then~$d$ is called the \emph{dissipation rate} of the dynamical system with supply rate~$s$ and storage function~$\ham$.
\end{definition}

Analogous to~\eqref{eq:willems-differential-dissipation-inequality}, if~$\theta\mapsto\ham(z(\theta))$ is differentiable, then~\eqref{eq:willems-energy-balance} is equivalent to the \emph{power balance}
\begin{equation}\label{eq:willems-power-balance}
    \ddtheta \ham(z(\theta)) \Big|_{\theta = t} = s(u(t), y(t)) - d(z(t), u(t))
\end{equation} 
for all~$t\in[0,T]$, $z_0 \in \RR^n$ and~$u \in \Ucal$.

We are particularly interested in quadratic supply rates.
These take the form
\begin{equation}\label{eq:hill-moylan-qsr-supply}
    s(u,y) = y\tp \Qbf y + 2 y\tp \Sbf u + u\tp \Rbf u,
\end{equation}
where the matrices~$\Qbf, \Sbf, \Rbf \in \RR^{m,m}$ satisfy~$\Qbf=\Qbf\tp$ and~$\Rbf=\Rbf\tp$.
If the system~\eqref{eq:moylan-system} is dissipative with respect to the supply rate~\eqref{eq:hill-moylan-qsr-supply}, we call it \emph{QSR-dissipative}.
Examples of supply rates modeled by~\eqref{eq:hill-moylan-qsr-supply} are the \emph{impedance supply} $s(u,y) = y\tp u$, which leads to the notion of \emph{passive} systems, and the \emph{scattering supply} $\norm{u}^2 - \norm{y}^2$, which is related to the notion of \emph{finite $L^2$-gain}, see, e.g.,~\cite{vanderschaft17-l2gain}.

\subsection{Characterizations of dissipative systems}

For linear time-invariant systems, popular characterizations of dissipative dynamical system include the Lur'e equations, see e.g.~\cite[Sec.~3--4]{brogliato20-dissipative}.
The following result is due to~\cite{hill75-cyclo} and a generalization of the Lur'e equations to nonlinear systems.

\begin{theorem}[\!{\cite[Theorem 8]{hill75-cyclo}}]\label{thm:hill-moylan-result}
    Let \Cref{as:well-posedness} hold with $T = \infty$, and let~$\ham \colon \RR^n \to \RRnonneg$ be continuously differentiable.
    Then~$\ham$ is a storage function for~\eqref{eq:moylan-system} with respect to the supply rate~\eqref{eq:hill-moylan-qsr-supply} if and only if there exists~$p \in \NN$ and functions~$\ell \colon \RR^n \to \RR^p$,~$W \colon \RR^n \to \RR^{p,m}$ such that 
    \begin{equation}\label{eq:moylan-qsr-conditions}
        \begin{aligned}
        \eta(z)\tp f(z) & = h(z)\tp \Qbf h(z) - \ell(z)\tp \ell(z), \\
        \tfrac12 \eta(z)\tp \B(z) & = h(z)\tp (\Qbf \D(z) + \Sbf) -  \ell(z)\tp W(z), \\
        W(z)\tp W(z) & = \Rbf + \D(z)\tp \Sbf + \Sbf\tp \D(z) + \D(z)\tp \Qbf \D(z)
        \end{aligned}
    \end{equation}
    for all~$z \in \RR^n$.
\end{theorem}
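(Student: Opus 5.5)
Both directions go through the differential dissipation inequality~\eqref{eq:willems-differential-dissipation-inequality}. Since $\ham$ is $C^1$ and solutions are classical, $\theta\mapsto\ham(z(\theta))$ is differentiable with derivative $\eta(z)\tp(f(z)+\B(z)u)$. Hence $\ham$ is a storage function if and only if
\begin{equation*}
    \eta(z(t))\tp\bigl(f(z(t))+\B(z(t))u(t)\bigr)\le s\bigl(u(t),h(z(t))+\D(z(t))u(t)\bigr)
\end{equation*}
along all trajectories. By \Cref{as:well-posedness}, for every $z_0\in\RR^n$ and every $v\in\RR^m$ we can start at $z_0$ with a continuous input satisfying $u(0)=v$. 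Evaluating the inequality at $t=0$ shows that it is equivalent to the pointwise inequality
\begin{equation*}
    \Phi(z,u):=s\bigl(u,h(z)+\D(z)u\bigr)-\eta(z)\tp\bigl(f(z)+\B(z)u\bigr)\ge 0
    \qquad\text{for all } z\in\RR^n,\ u\in\RR^m.
\end{equation*}
The converse direction, from the pointwise inequality back to the dissipation inequality, follows by integrating over $[t_0,t_1]$.

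Next we expand $\Phi(z,\cdot)$ as a quadratic polynomial in $u$:
\begin{equation*}
    \Phi(z,u)=u\tp N(z)u+2b(z)\tp u+c(z),
\end{equation*}
where
\begin{align*}
    N &= \Rbf+\D\tp\Sbf+\Sbf\tp\D+\D\tp\Qbf\D,\\
    b\tp &= h\tp(\Qbf\D+\Sbf)-\tfrac12\eta\tp\B,\\
    c &= h\tp\Qbf h-\eta\tp f.
\end{align*}
These follow by substituting $y=h+\D u$ into~\eqref{eq:hill-moylan-qsr-supply} and collecting terms; $N$ is symmetrized in the obvious way.

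The ``if'' direction is then a direct computation. Inserting~\eqref{eq:moylan-qsr-conditions} gives
\begin{equation*}
    N=W\tp W,\qquad b=W\tp\ell,\qquad c=\ell\tp\ell,
\end{equation*}
so that $\Phi(z,u)=\norm{\ell(z)+W(z)u}^2\ge0$. As a by-product, this identifies the dissipation rate in~\eqref{eq:willems-power-balance}.

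For the ``only if'' direction, fix $z$. The nonnegativity of a quadratic function of $u$ on all of $\RR^m$ is equivalent to the positive semidefiniteness of the symmetric $(m+1)\times(m+1)$ matrix
\begin{equation*}
    M(z)=\begin{bmatrix} c(z) & b(z)\tp\\ b(z) & N(z)\end{bmatrix}.
\end{equation*}
To see this, homogenize via $(1,u)\mapsto(\tau,u)$, and handle the limit $\tau\to0$ by continuity. A psd matrix admits a factorization $M(z)=L(z)\tp L(z)$ with $L(z)\in\RR^{p,m+1}$; take $p=m+1$, for instance via the symmetric square root. Writing $L=[\ell\;\;W]$ with first column $\ell(z)$ and remaining block $W(z)$, the blockwise equalities are exactly the three conditions in~\eqref{eq:moylan-qsr-conditions}.

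I expect the main obstacle to be the careful justification of the reduction to the pointwise inequality, namely that every pair $(z,u)$ is actually attained at some time along an admissible trajectory. The homogenization argument for the psd equivalence needs some care as well. The statement makes no regularity demand on $\ell$ and $W$, so a pointwise choice of factorization suffices. Measurability or continuity could nevertheless be obtained by using the psd square root, which is continuous in $z$ whenever $M$ is.
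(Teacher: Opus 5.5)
Your proposal is correct, and the step you flag as the main obstacle is immediate: taking $t_0=0$, $z(0)=z$ and the constant input $u\equiv v$ realizes every pair $(z,v)$, after which the factorization of the positive semidefinite matrix, $M(z)=L(z)\tp L(z)$ with $L=[\ell\;\,W]$, gives exactly the three conditions. The paper gives no proof of its own and only cites Hill and Moylan, whose argument is essentially yours (the nonnegative dissipation, quadratic in $u$, is factored as $\norm{\ell(z)+W(z)u}^2$), which matches the paper's later remark that this is the dissipation rate.
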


\section{Time discretization}\label{sec:time_discretization}

Our main goal is to discretize~\eqref{eq:moylan-system} in time.
More precisely, for given time points $t_i \in [0,T]$, $i=0,\dots,\ntimesteps$ with 
\begin{equation*}
    0 = t_0 < \dots < t_{\ntimesteps} = T,\quad \tau_i\vcentcolon= t_{i+1}-t_i\text{ for }i=0,\ldots,\ntimesteps-1,
\end{equation*}
we aim to find approximations $z_i \approx z(t_i)$, where~$z$ denotes the true trajectory of~\eqref{eq:moylan-system}.
While many different methods for this task exist, see, e.g.,~\cite{hairer93-solving} for an overview, dissipativity properties are not preserved under discretization in general.
Our goal is to define a discretization scheme that guarantees a time-discrete analogue of the power balance~\eqref{eq:willems-power-balance}, see the upcoming \Cref{prop:discr:dg-qsr-powerbalance} for the precise notion.
For an overview on structure-preserving discretization methods, we refer to~\cite{hairer06-geometric} and~\cite{kotyczka19-numerical}.
In the following, we focus on \emph{discrete gradient methods}.

\subsection{Discrete gradients}

Discrete gradients can be understood as a generalization of difference quotients to higher dimensions.
The precise definition is as follows.

\begin{definition}\label{def:discrete-gradient}
    Let~$\ham \colon \RR^n \to \RR$ be continuously differentiable.
    We call a continuous function~$\etabar \colon \RR^n \times \RR^n \to \RR^n$ a \emph{discrete gradient of~$\ham$} if it satisfies the \emph{mean value property}
    \begin{equation}\label{eq:discr:dg-mean-value-property}
        \ham(w) - \ham(z) = \etabar(z, w)\tp(w - z)
    \end{equation}
    and the \emph{consistency property} 
    \begin{equation}\label{eq:discr:dg-consistency-property}
        \etabar(z,z) = \eta(z)
    \end{equation} 
    for all~$z, w \in \RR^n$.
\end{definition}

If~$n=1$, then for~$z \neq w$ the unique discrete gradient is the difference quotient 
\begin{equation*}
    \etabar(z, w) = \frac{\ham(w) - \ham(z)}{w - z}.
\end{equation*}
In higher dimensions, the discrete gradient is not unique, since only the component along~$w - z$ is restricted.
As noted by~\cite{mclachlan99-geometric}, this restriction results in the fact that the approximation of point values of $\nabla\ham$ by means of discrete gradients can be at most of second order.
Nevertheless, the overall discretization scheme can have higher order, see e.g.~\cite{norton14-discrete,celledoni17-energy,eidnes22-order,kemmochi23-higher} for corresponding approaches.
A characterization of discrete gradients is given in~\cite[Proposition 3.2]{mclachlan99-geometric}.

In the literature, different examples of discrete gradients~$\etabar$ can be found, which we briefly summarize in the following.
The \emph{mean value discrete gradient}~\cite{harten83-upstream} is used in average vector field methods, see, e.g.,~\cite{celledoni12-preserving}, and reads 
\begin{equation*}
    \etabar(z, w) 
    = 
    \int_{0}^{1} \eta\big( (1-s) z + s w \big) \ds. 
\end{equation*}
The \emph{Itoh--Abe discrete gradient}~\cite{itoh88-hamiltonian} is given by 
\begin{equation*}
    \etabar(z, w)
    =
    \vec{
        \frac{
            \ham(w^{(1)}, z^{(2)}, \cdots, z^{(n)}) 
            - 
            \ham(z)
            }{
                w^{(1)} - z^{(1)}
            }
        \\
        \frac{
            \ham(w^{(1)}, w^{(2)}, z^{(3)}, \cdots, z^{(n)}) 
            - 
            \ham(w^{(1)}, z^{(2)}, \cdots, z^{(n)})
            }{
                w^{(2)} - z^{(2)}
            }
        \\
        \vdots
        \\
        \frac{
            \ham(w)
            -
            \ham(w^{(1)}, \cdots, w^{(n-1)}, z^{(n)})
            }{
                w^{(n)} - z^{(n)}
            }
    },
\end{equation*}
where~$z^{(k)}$ denotes the~$k$-th entry of~$z \in \RR^n$ and~$0/0$ is interpreted as~$\partial_i \ham(z)$.
The \emph{Gonzalez discrete gradient}~\cite{gonzalez96-time} is given by 
\begin{equation}\label{eq:discr:gonzalez-dg}
    \etabar(z, w)
    =
    \eta \left( \frac{z + w}2 \right)
    +
    \frac{
        \ham(w) - \ham(z) 
        -
        \eta \left( \frac{z + w}2 \right)\tp (w - z)
    }
    {
        \norm{w - z}^2
    }
    (w - z)
\end{equation}
for~$z \neq w$ and~$\etabar(z, z) = \eta(z)$.

Apart from applications in structure-preserving discretization~\cite{harten83-upstream,itoh88-hamiltonian,gonzalez96-time,mclachlan99-geometric}, discrete gradient methods are applied to optimization methods~\cite{ehrhardt25-geometric}.
They can also be extended to systems with algebraic equations~\cite{schulze23-structure2,kinon26-discrete}.
In what follows, we discuss their application to QSR-dissipative systems.

\subsection{Proposed discretization scheme}

In this section, we show how discrete gradient methods can be used to obtain a time-discrete analogue of the power balance~\eqref{eq:willems-power-balance} for QSR-dissipative systems.
We make the following assumption.

\begin{assumption}\label{as:qsr-scheme}
    \Cref{as:well-posedness} is satisfied.
    Moreover, the system~\eqref{eq:moylan-system} is dissipative with respect to the supply rate~\eqref{eq:hill-moylan-qsr-supply} and the continuously differentiable storage function~$\ham \colon \RR^n \to \RR$ satisfies the conditions~\eqref{eq:moylan-qsr-conditions}.
    In addition, the matrix $\Qbf \D(z) + \Sbf$ is invertible for all~$z\in\RR^n$.
\end{assumption}

We emphasize that the assumption that $\Qbf \D(z) + \Sbf$ is invertible restricts the class of QSR-dissipative systems.
For instance, systems where the numbers of inputs and outputs do not coincide are excluded by the assumption.
Among others, the assumption is satisfied for passive systems, i.e., for the supply rate ~$s(u,y) = y\tp u$ with~$\Qbf = 0$ and~$\Sbf=\tfrac12\eye_m$.
There are also many other relevant examples where the assumption is satisfied, e.g., the ones considered in \Cref{sec:numerical_experiments}.

If \Cref{as:qsr-scheme} holds, then the system~\eqref{eq:moylan-system} is QSR-dissipative with storage function~$\ham$ with the dissipation rate~$d(z, u) = \norm{\ell(z) + W(z)u}^2$ in~\eqref{eq:willems-power-balance}, see the proof of \Cref{thm:hill-moylan-result} in~\cite{hill75-cyclo}.
Hence, the power balance~\eqref{eq:willems-power-balance} reads 
\begin{equation}\label{eq:discr:qsr-powerbalance}
    \ddtheta \ham(z(\theta)) \Big|_{\theta = t} 
    =
    \eta(z)\tp (f(z) + \B(z) u) 
    =
    s(u,y) - \norm{\ell(z) + W(z)u}^2,
\end{equation}
where we abbreviate~$z = z(t)$,~$u = u(t)$ and~$y = y(t)$.

As mentioned before, our goal is to state a scheme that produces iterates which satisfy a discrete power balance resembling~\eqref{eq:discr:qsr-powerbalance}.
For this, we note that \Cref{as:qsr-scheme} implies that~$h(z)$ can be recovered from the second equation in~\eqref{eq:moylan-qsr-conditions} as
\begin{equation}\label{eq:discr:qsr-h-def}
    h(z) = (\Qbf \D(z) + \Sbf)\mtp(\tfrac12 \B(z)\tp \eta(z) + W(z)\tp \ell(z)).
\end{equation}
To discretize the system~\eqref{eq:moylan-system}, we split the dynamics~$f(z)$ into two components: one component in the direction of~$\eta(z)$, and another component orthogonal to~$\eta(z)$.
If~$\eta(z)\neq0$, then this can be achieved using the orthogonal projections~$\matrixproj_{\eta(z)}$ onto~$\Span\{\eta(z)\}$ and~$\matrixproj_{\eta(z)^\perp}$ onto~$\Span\{\eta(z)\}^\perp$ via
\begin{equation}\label{eq:discr:f-qsr-decomposition}
    f(z)
    = 
    \matrixproj_{\eta(z)} f(z) + \matrixproj_{\eta(z)^\perp} f(z).
\end{equation}
Here, for arbitrary~$v\neq 0$ we set~$\matrixproj_v \coloneq \frac{v v\tp}{\norm{v}^2}$ and~$\matrixproj_{v^\perp} \coloneq \eye - \matrixproj_v$.
This decomposition also appears in~\cite{wang03-generalized}, where the authors call it the \emph{orthogonal decomposition Hamiltonian realization} of~$f$.
Using~\eqref{eq:moylan-qsr-conditions}, for~$\eta(z)\neq 0$ we obtain 
\begin{equation}\label{eq:discr:f-qsr-first-term}
    \matrixproj_{\eta(z)} f(z)
    =
    \frac{ \eta(z)\tp f(z)}{\norm{\eta(z)}^2} \eta(z)
    =
    \frac{h(z)\tp \Qbf h(z) - \norm{\ell(z)}^2}{\norm{\eta(z)}^2} \eta(z)
    =
    \gamma(z) \eta(z)
\end{equation}
where
\begin{equation*}
    \gamma(z) \coloneq \frac{h(z)\tp \Qbf h(z) - \norm{\ell(z)}^2}{\norm{\eta(z)}^2}
\end{equation*}
with~$h(z)$ being expressed as in~\eqref{eq:discr:qsr-h-def}.
Replacing~$f$,~$g$,~$u$,~$k$,~$\ell$,~$W$ by suitable approximations, adapting~$h$ and~$\gamma$ according to~\eqref{eq:discr:qsr-h-def}, and replacing~$\eta$ by a discrete gradient~$\etabar$, we arrive at the following class of discretization schemes.
A specific discretization scheme can be obtained, e.g., by using midpoint approximations%
\footnote{Here, ``midpoint approximation'' refers to $\fbar(z,w) = f(\tfrac{z+w}{2})$.}
for $f$,~$g$,~$u$,~$k$,~$\ell$,~$W$ and the Gonzalez discrete gradient for $\etabar$.

\begin{scheme}[discrete gradient method for QSR-dissipative systems]\label{scheme:dg-qsr}
    Let \Cref{as:qsr-scheme} hold and let~$\etabar$ be a discrete gradient of~$\ham$.
    Moreover, let $\fbar\colon \RR^n \times \RR^n \to \RR^n$,~$\Bbar \colon \RR^n \times \RR^n \to \RR^{n,m}$,~$\Dbar\colon \RR^n \times \RR^n \to \RR^{m,m}$,~$\ellbar\colon \RR^n \times \RR^n \to \RR^p$,~$\Wbar\colon \RR^n \times \RR^n \to \RR^{p,m}$ be continuous and consistent in the sense $\fbar(z,z)=f(z)$, $\Bbar(z,z)=\B(z)$, $\ldots$, $\Wbar(z,z)=W(z)$ for all $z\in\RR^n$, and satisfy
    \begin{align*}
    	\det(\Qbf \Dbar(z, w) + \Sbf) &\ne 0,\\ 
    	\Wbar(z,w)\tp \Wbar(z,w) &= \Rbf + \Dbar(z,w)\tp \Sbf + \Sbf\tp \Dbar(z,w) + \Dbar(z,w)\tp \Qbf \Dbar(z,w)
    \end{align*}
    for all $(z,w)\in\RR^n \times \RR^n$.
    Furthermore, according to~\eqref{eq:discr:qsr-h-def}, define~$\hbar \colon \RR^n \times \RR^n \to \RR^m$ via
    \begin{equation}\label{eq:discr:qsr-dg-hbar-definition}
        \hbar(z, w)
        \coloneq
        (\Qbf \Dbar(z, w) + \Sbf)\mtp
        \left( 
            \tfrac12 \Bbar(z, w)\tp \etabar(z, w) 
            +
            \Wbar(z, w)\tp \ellbar(z,w)
        \right)
    \end{equation}
    and let $\ubar\colon [0,\infty)\times\RR\to\RR^m$ be continuous and satisfy the consistency condition $\ubar(t,0)=u(t)$ for all $t\in [0,\infty)$.
    Finally, for~$z, w \in \RR^n$ with~$\etabar(z, w) \neq 0$, define
    \begin{equation*}
        \gammabar(z, w) \coloneq 
        \frac
        {
            \hbar(z, w)\tp \Qbf \hbar(z, w)
            -
            \norm{\ellbar(z, w)}^2
        }
        {
            \norm{\etabar(z, w)}^2
        }.
    \end{equation*}
    Then, we consider the following time-discrete problem: For given $z_i\in\RR^n$, find~$z_{i+1} \in \RR^n$ such that~$\etabar(z_i, z_{i+1}) \neq 0$ and
    \begin{equation}\label{eq:discr:dg-qsr-discrete-system}
    \begin{aligned}
        \frac{ z_{i+1}  - z_{i} }
        {\tau_i}
        &
        = 
        \gammabar(z_i,z_{i+1}) \etabar(z_i,z_{i+1}) 
        +
        \matrixproj_{\etabar(z_i,z_{i+1})^\perp}
        \fbar(z_i,z_{i+1}) 
        +
        \Bbar(z_i,z_{i+1}) \ubar(t_i,\tau_i)
    \end{aligned}
    \end{equation}
    for~$i=0,\dots,\ntimesteps-1$.
\end{scheme}

Next, we provide sufficient conditions for~\eqref{eq:discr:dg-qsr-discrete-system} being second-order accurate.
For this, we argue as in~\cite[Section 3(c)]{mclachlan99-geometric}.
Under smoothness assumptions,~\eqref{eq:discr:dg-qsr-discrete-system} is a consistent method.
By choosing symmetric approximations for $f$,~$g$,~$k$,~$\ell$,~$W$,~$\eta$, i.e., $\fbar(z,w)=\fbar(w, z)$, etc.~for all~$z, w \in \RR^n$, then for $u=0$ the method is symmetric in the sense of~\cite[Definition~3.1 in Section II.3]{hairer06-geometric} and therefore must have an even order of convergence by~\cite[Theorem 3.2 in Section II.3]{hairer06-geometric}.
As long as $\ubar$ is a second-order approximation of $u$, the convergence result also applies to the more general case with $u\ne 0$.
These conditions are satisfied, e.g., when choosing midpoint approximations for $f$,~$g$,~$k$,~$\ell$,~$W$,~$u$ and the mean value or Gonzalez discrete gradient for $\etabar$.

\begin{theorem}[well-posedness]\label{thm:discr:qsr-dg-convergence}
    Assume that~$f,\B,\D,u$ in~\eqref{eq:moylan-system} and~$\ell,W$ in~\eqref{eq:moylan-qsr-conditions} are twice continuously differentiable, that the approximations~$\fbar$,~$\Bbar$,~$\Dbar$,~$\ellbar$,~$\Wbar$,~$\etabar$ are symmetric and twice continuously differentiable, and that $\ubar$ is a second-order approximation of $u$.
    Moreover, assume that there exists an uncontrolled trajectory~$z \in C^3([0,T],\RR^n)$ such that for all~$t \in [0,T]$ we have~$\eta(z(t)) \neq 0$.
    Then \Cref{scheme:dg-qsr} is locally well-posed and second-order accurate.
\end{theorem}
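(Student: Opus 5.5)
We write the scheme \eqref{eq:discr:dg-qsr-discrete-system} as a one-step map defined implicitly by the residual
\begin{equation*}
    F(w; z, t, \tau) \coloneq w - z - \tau\, \Phi(z, w, t, \tau),
\end{equation*}
where $\Phi(z,w,t,\tau) \coloneq \gammabar(z,w)\etabar(z,w) + \matrixproj_{\etabar(z,w)^\perp}\fbar(z,w) + \Bbar(z,w)\ubar(t,\tau)$. Local well-posedness then means that, for $z$ near the trajectory and $\tau$ small, $F(\cdot\,;z,t,\tau)=0$ has a unique solution $w$ close to $z$ with $\etabar(z,w)\neq 0$, depending smoothly on the data. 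We obtain this from the implicit function theorem around $(w,z,\tau)=(z(t),z(t),0)$.

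The first step is regularity of $\Phi$. Since $\etabar$ is continuous and consistent, $\etabar(z(t),z(t))=\eta(z(t))\neq 0$. By compactness of $\{z(t): t\in[0,T]\}$ there is $\delta>0$ such that $\norm{\etabar(z,w)}\ge c>0$ for all $z,w$ in the $\delta$-neighbourhood $K_\delta$ of the trajectory. On $K_\delta\times K_\delta$ the projection $\matrixproj_{\etabar^\perp}=\eye-\etabar\etabar\tp/\norm{\etabar}^2$ and the factor $\gammabar$ are $C^2$. For $\gammabar$ we also need $\hbar$ to be $C^2$. This follows because $\Dbar$ is $C^2$ and $\det(\Qbf\Dbar+\Sbf)\neq0$, so matrix inversion is smooth; the denominator of $\gammabar$ is bounded below by $c^2$. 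Hence $\Phi$ is $C^2$ in $(z,w)$ and continuous in $(t,\tau)$, with bounds uniform in $t\in[0,T]$ (using $u\in C^2$).

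The second step is solvability. We have $\partial_w F = \eye - \tau\,\partial_w\Phi$, which is invertible for $\tau<1/\sup\norm{\partial_w\Phi}$. Rather than relying only on the pointwise implicit function theorem, we apply Banach's fixed point theorem to $w\mapsto z+\tau\Phi(z,w,t,\tau)$ on a closed ball around $z$ inside $K_\delta$. This gives a uniform step-size bound $\tau_0$ and a unique solution $w=\Psi_\tau(z)$ with $\etabar(z,w)\neq0$, depending $C^2$-smoothly on $z$. Uniformity in $t$ lets us iterate along $[0,T]$ as long as the iterates stay in $K_\delta$, which convergence will guarantee a posteriori.

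The third step is accuracy, following the argument sketched before the theorem.

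\emph{Consistency.} At $w=z$, consistency of all approximations and of $\etabar$ gives $\hbar(z,z)=h(z)$ by \eqref{eq:discr:qsr-h-def}, $\gammabar(z,z)=\gamma(z)$, and hence $\Phi(z,z,t,0)=f(z)+\B(z)u(t)$ by \eqref{eq:discr:f-qsr-decomposition} and \eqref{eq:discr:f-qsr-first-term}. So the method is consistent, i.e.\ of order at least one.

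\emph{Order two for $u=0$.} The symmetry of every approximation makes $\Phi(z,w)=\Phi(w,z)$. Therefore the method is symmetric (its adjoint equals itself), and by \cite[Theorem 3.2 in Section II.3]{hairer06-geometric} the order is even, hence at least two.

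\emph{Order two for $u\neq0$.} We expand the local error directly. Substituting $z(t_{i+1})$ into $F$ and Taylor expanding (this needs $z\in C^3$ and $\Phi\in C^2$) shows that the symmetric part contributes $O(\tau^3)$. The input term contributes $\tau\Bbar(z_i,z_{i+1})(\ubar(t_i,\tau)-u(t_i+\tau/2))+O(\tau^3)$, which is $O(\tau^3)$ because $\ubar$ is a second-order approximation.

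\emph{Global error.} The standard argument (Lipschitz stability of $\Psi_\tau$ on $K_\delta$ plus a discrete Gronwall inequality) gives a global error $O(\tau^2)$. This also keeps the iterates in $K_\delta$ for $\tau$ small enough.

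The main obstacle is this last local-error computation in the controlled case. Symmetry alone does not cover the time-dependent input, so we must carefully verify that the symmetric expansion of the autonomous part about the midpoint, combined with the input approximation, cancels the $\tau^2$ term. The uniform lower bound on $\norm{\etabar}$ is the other delicate point, but the compactness argument above should handle it.
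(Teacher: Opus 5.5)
Your proposal is correct and follows essentially the paper's route: implicit function theorem at $\tau=0$, where $D_2\dgfixpoint((z_0,0),z_0)=\eye$; consistency via $\hbar(z,z)=h(z)$ and $\gammabar(z,z)=\gamma(z)$; and symmetry plus \cite[Theorem~3.2 in Section~II.3]{hairer06-geometric}. You do by hand the uniformity and Gronwall steps that the paper takes from \cite[Lemma~4.4, Theorem~4.10]{deuflhard02-scientific}, and you treat the input the paper neglects; the step you flag as the main obstacle is immediate, because every symmetric $C^2$ map $\chi$ satisfies $\partial_1\chi=\partial_2\chi$ on the diagonal, whence $\chi(z(t),z(t+\tau))=\chi(m,m)+O(\tau^2)$ with $m=z(t+\tfrac{\tau}{2})$, since $z(t)+z(t+\tau)-2m=O(\tau^2)$. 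Two small repairs: the bound $\norm{\etabar(z,w)}\ge c$ cannot hold on all of $K_\delta\times K_\delta$ (points near distant parts of the trajectory may give $\etabar(z,w)=0$, e.g.\ $\etabar(z,-z)=0$ for the Gonzalez discrete gradient of $\ham=\tfrac12\norm{z}^2$), only for pairs near a common trajectory point, which is all your fixed-point ball uses; and your controlled-case argument needs $\eta\neq0$ along the \emph{controlled} trajectory, which the hypotheses, stated only for an uncontrolled one, do not provide.
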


\begin{proof}
    For the proof, we neglect the input~$u$, since the approximations~$\Bbar$ and~$\ubar$ are second-order accurate under the smoothness assumptions made for $\B$ and $u$.
    Moreover, to simplify the proof, we assume that the stepsize~$\tau \coloneq t_{i+1} - t_i$ is constant while noting that similar arguments can be carried out in the non-constant case.

    First, notice that since matrix inversion is a smooth operation on the set of invertible matrices, the function~$\hbar$ defined by~\eqref{eq:discr:qsr-dg-hbar-definition} is continuously differentiable, and thus also~$\gammabar$ and~$\matrixproj_{\etabar}$ are continuously differentiable on the set 
    \begin{equation*}
        \dgnonzeroset 
        \coloneq 
        \big\{ 
            (z,w) \in \RR^n \times \RR^n 
        ~\big|~ 
            \etabar(z,w) \neq 0
        \big\}.
    \end{equation*}

    Set~$\domain(\dgfixpoint) \coloneq \{ ((z, \tau), w) \in \RR^{n+1}\times\RR^n ~|~ (z,w) \in \dgnonzeroset \}$ and define
    \begin{equation*}
        \dgfixpoint 
        \colon \domain(\dgfixpoint) \to \RR^n
        ,~~
        \big(
            (z, \tau), w
        \big)
        \mapsto
        w - z - \tau
        \left( 
            \gammabar(z, w) \etabar(z,w)
            +
            \matrixproj_{\etabar(z,w)^\perp} \fbar(z, w)
        \right).
    \end{equation*}
    By our assumptions, the map~$\dgfixpoint$ is continuously differentiable.
    Furthermore, set 
    \begin{equation*}
        \tilde\dgnonzeroset 
        \coloneq 
        \big\{ 
            z \in \RR^n 
        ~\big|~ 
            \nabla \mathcal{H}(z)\ne 0
        \big\}
    \end{equation*}
    and notice that we have~$\dgfixpoint((z_0, 0), z_0) = 0$ and~$D_2 \dgfixpoint((z_0, 0), z_0) = \eye_n$ for all~$z_0\in\tilde\dgnonzeroset$, where~$D_2 \dgfixpoint$ denotes the Jacobian of~$\dgfixpoint$ with respect to the second variable.
    Hence, by the implicit function theorem~\cite[Theorem 4.E in Section 4.8]{zeidler91-applied} for all~$z_0 \in \tilde\dgnonzeroset$ and sufficiently small~$\delta > 0$, there exists a continuously differentiable mapping 
    \begin{equation*}
        \Psi \colon 
        \domain(\Psi)
        \to 
        \RR^n
        ,~~
        (z, \tau) \mapsto w
        \und
        \domain(\Psi)
        \coloneq
        \big\{
        (z,\tau) \in \tilde\dgnonzeroset \times \RR
        ~\big|~
        \norm{(z-z_0, \tau)}_{\RR^{n+1}} < \delta
        \big\}
    \end{equation*}
    such that~$\dgfixpoint((z, \tau), \Psi(z,\tau)) = 0$ for all~$(z,\tau) \in \domain(\Psi)$.
    In other words,
    \begin{align*}
        \Psi(z, \tau)
        & = 
        z
        +
        \tau 
        \left(
        \gammabar(z,\Psi(z, \tau))
        \etabar\big(z, \Psi(z, \tau)\big)
        +
        \matrixproj_{
        \etabar(z, \Psi(z, \tau))^\perp
        }
        \fbar(z,\Psi(z, \tau))
        \right)
        \\
        & =
        z
        +
        \tau
        \phi(z,\tau)
    \end{align*}
    with
    \begin{equation*}
        \phi \colon \domain(\Psi) \to \RR^n
        ,~~
        (z, \tau) 
        \mapsto 
        \gammabar(z,\Psi(z, \tau))
        \etabar\big(z, \Psi(z, \tau)\big)
        +
        \matrixproj_{
        \etabar(z, \Psi(z, \tau))^\perp
        }
        \fbar(z,\Psi(z, \tau)).
    \end{equation*}
    This shows local well-posedness of the scheme.
    Moreover, due to the consistency properties of $\Dbar$,~$\Bbar$,~$\etabar$,~$\Wbar$,~$\ellbar$,~$\fbar$ and~\eqref{eq:discr:f-qsr-decomposition} and~\eqref{eq:discr:f-qsr-first-term}, we have~$\phi(z,0) = f(z)$ for all~$z \in \tilde\dgnonzeroset$.
    By the chain rule,~$\phi$ is continuously differentiable, and~\cite[Lemma 4.4]{deuflhard02-scientific} implies that~\eqref{eq:discr:dg-qsr-discrete-system} is consistent.
    Since, in particular,~$\phi$ is locally Lipschitz continuous with respect to~$z$, our assumption on the existence of a trajectory~$z \in C^3([0,T], \tilde\dgnonzeroset)$ ensures that we can employ~\cite[Theorem 4.10]{deuflhard02-scientific} to show that~\eqref{eq:discr:dg-qsr-discrete-system} is at least first-order convergent.
    Because of the symmetry assumptions on~$\fbar$,~$\Bbar$,~$\Dbar$,~$\ellbar$,~$\Wbar$,~$\etabar$, the method is symmetric, and the smoothness assumptions allow us to employ~\cite[Theorem~3.2 in Section~II.3]{hairer06-geometric}, which implies the second-order accuracy of the method.
\end{proof}

Before we show that solutions of \Cref{scheme:dg-qsr} satisfy a discrete power balance, let us make a few remarks.
First, note that another second-order accurate approximation of the dynamics~\eqref{eq:moylan-system} is given by
\begin{equation}\label{eq:discr:qsr-implicit-midpoint}
    z_{i+1} = z_i - (t_{i+1} - t_i) (\fbar(z_i, z_{i+1}) + \Bbar(z_i, z_{i+1}) \ubar(t_i,\tau_i)),
\end{equation}
where $\fbar$,~$\Bbar$,~$\ubar$ are chosen based on the implicit midpoint rule, see, e.g.,~\cite[Example~6.3.1]{deuflhard02-scientific}.
To be able to exploit the properties of the discrete gradient, \Cref{scheme:dg-qsr} incorporates~$\etabar$ with a prefactor that accounts for the correct terms in the power balance after the mean value property~\eqref{eq:discr:dg-mean-value-property} is applied.
This is done in such a way that the approximate dynamics~\eqref{eq:discr:qsr-implicit-midpoint} remain unaltered in the direction orthogonal to~$\etabar(z_i, z_{i+1})$.
Hence, in this special case, we can view~\eqref{eq:discr:dg-qsr-discrete-system} as the smallest deviation from the implicit midpoint rule that enables the use of the mean value property to obtain the  time-discrete analogue~\eqref{eq:discr:dg-qsr-powerbalance} of the power balance~\eqref{eq:discr:qsr-powerbalance}.

Second, we note that even for linear dynamics,~\eqref{eq:discr:dg-qsr-discrete-system} is a nonlinear system, which is suboptimal from a computational viewpoint.
However, as long as the storage function is quadratic, the implicit midpoint rule can be applied for linear systems to obtain a time-discrete power balance.
Third, although the restriction~$\etabar(z_i, z_{i+1}) \neq 0$ is unsatisfactory from a theoretical point of view, we highlight that it did not pose any difficulties in our numerical experiments.

The following proposition shows that solutions of \Cref{scheme:dg-qsr} satisfy a discrete version of the power balance~\eqref{eq:discr:qsr-powerbalance}.

\begin{proposition}[discrete power balance]\label{prop:discr:dg-qsr-powerbalance}
    Let \Cref{as:qsr-scheme} hold and~$(z_i)_{i=0,\dots,\ntimesteps}$ be a solution of \Cref{scheme:dg-qsr}.
    Then, for~$i=0,\dots,\ntimesteps-1$ we have
    \begin{equation}\label{eq:discr:dg-qsr-powerbalance}
        \frac{\ham(z_{i+1}) - \ham(z_{i})}{t_{i+1} - t_{i}}
        =
        s(\ubar(t_i,\tau_i), \ybar_i) 
        - 
        \norm{\ellbar(z_i, z_{i+1}) + \Wbar(z_i, z_{i+1}) \ubar(t_i,\tau_i)}^2,
    \end{equation}
    where the discrete output~$\ybar_i$ is defined as~$\ybar_i \coloneq \hbar(z_i, z_{i+1}) + \Dbar(z_i, z_{i+1}) \ubar(t_i,\tau_i)$.
\end{proposition}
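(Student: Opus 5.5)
The plan is to apply the mean value property \eqref{eq:discr:dg-mean-value-property} of the discrete gradient and then expand the right-hand side using the algebraic identities built into \Cref{scheme:dg-qsr}. To lighten notation, abbreviate $\etabar = \etabar(z_i,z_{i+1})$, and similarly $\Bbar$, $\Dbar$, $\ellbar$, $\Wbar$, $\hbar$, $\gammabar$, $\fbar$. Write $\ubar = \ubar(t_i,\tau_i)$ and $\tau_i = t_{i+1}-t_i$.

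By \eqref{eq:discr:dg-mean-value-property},
\begin{equation*}
\frac{\ham(z_{i+1})-\ham(z_i)}{\tau_i} = \etabar\tp \frac{z_{i+1}-z_i}{\tau_i}.
\end{equation*}
We insert \eqref{eq:discr:dg-qsr-discrete-system}, which is allowed since $\etabar\neq0$ by definition of the scheme. The projected term drops out because $\etabar\tp \matrixproj_{\etabar^\perp} = 0$. The $\gammabar$ term gives $\gammabar\norm{\etabar}^2 = \hbar\tp\Qbf\hbar - \norm{\ellbar}^2$. Hence the left-hand side equals
\begin{equation*}
\hbar\tp\Qbf\hbar - \norm{\ellbar}^2 + \etabar\tp\Bbar\ubar .
\end{equation*}

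Next we rewrite $\etabar\tp\Bbar\ubar$. From the definition \eqref{eq:discr:qsr-dg-hbar-definition}, invertibility of $\Qbf\Dbar+\Sbf$ yields $\tfrac12\Bbar\tp\etabar = (\Qbf\Dbar+\Sbf)\tp\hbar - \Wbar\tp\ellbar$. Therefore
\begin{equation*}
\etabar\tp\Bbar\ubar = 2\hbar\tp(\Qbf\Dbar+\Sbf)\ubar - 2\ellbar\tp\Wbar\ubar .
\end{equation*}

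It remains to expand the target. With $\ybar_i = \hbar + \Dbar\ubar$ and $\Qbf$ symmetric,
\begin{equation*}
s(\ubar,\ybar_i) = \hbar\tp\Qbf\hbar + 2\hbar\tp\Qbf\Dbar\ubar + \ubar\tp\Dbar\tp\Qbf\Dbar\ubar + 2\hbar\tp\Sbf\ubar + 2\ubar\tp\Dbar\tp\Sbf\ubar + \ubar\tp\Rbf\ubar .
\end{equation*}
Using the scalar identity $2\ubar\tp\Dbar\tp\Sbf\ubar = \ubar\tp(\Dbar\tp\Sbf+\Sbf\tp\Dbar)\ubar$ together with the assumed factorization of $\Wbar\tp\Wbar$, the $\hbar$-free terms collapse to $\norm{\Wbar\ubar}^2$.

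Subtracting $\norm{\ellbar+\Wbar\ubar}^2 = \norm{\ellbar}^2 + 2\ellbar\tp\Wbar\ubar + \norm{\Wbar\ubar}^2$ then leaves
\begin{equation*}
\hbar\tp\Qbf\hbar + 2\hbar\tp(\Qbf\Dbar+\Sbf)\ubar - \norm{\ellbar}^2 - 2\ellbar\tp\Wbar\ubar .
\end{equation*}
This coincides with the expression obtained from the scheme, which proves \eqref{eq:discr:dg-qsr-powerbalance}.

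The only delicate point is bookkeeping. One has to symmetrize the quadratic form $\ubar\tp\Dbar\tp\Sbf\ubar$ correctly so that it matches the $\Wbar\tp\Wbar$ condition, and keep track of the transposes when inverting the relation for $\hbar$. Beyond that the argument is pure algebra, and no analytic step is needed.
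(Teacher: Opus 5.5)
Your proposal is correct and follows essentially the same route as the paper. Both arguments use the mean value property together with $\etabar\tp\matrixproj_{\etabar^\perp}=0$ to reduce the difference quotient to $\hbar\tp\Qbf\hbar-\norm{\ellbar}^2+\etabar\tp\Bbar\ubar$, and then close the computation with the identity $\etabar\tp\Bbar\ubar=2\hbar\tp(\Qbf\Dbar+\Sbf)\ubar-2\ellbar\tp\Wbar\ubar$ from the definition of $\hbar$ and the discrete factorization of $\Wbar\tp\Wbar$. The only cosmetic difference is that the paper expands $s(\ubar_i,\ybar_i)$ minus the difference quotient directly into $\norm{\ellbar_i+\Wbar_i\ubar_i}^2$, whereas you expand both sides separately and compare them.
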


\begin{proof}
    For the sake of brevity, for~$i=0,\dots,\ntimesteps-1$ and~$\phi \in \{ \fbar, \Bbar, \hbar, \Dbar, \ellbar, \Wbar, \etabar, \gammabar\}$, let us abbreviate~$\phi_i \coloneq \phi(z_i, z_{i+1})$ as well as $\ubar_i\vcentcolon=\ubar(t_i,\tau_i)$.
    Since~$\etabar_i\tp \matrixproj_{\etabar_i^\perp} = 0$, multiplying~\eqref{eq:discr:dg-qsr-discrete-system} by~$\etabar_i\tp$ from the left leaves only the numerator of~$\gammabar_i$ and the input term on the right-hand side.
    Together with the mean value property~\eqref{eq:discr:dg-mean-value-property} for the left-hand side, we arrive at
    \begin{equation}\label{eq:discr:qsr-dg-ham-difference}
        \frac{\ham(z_{i+1}) - \ham(z_{i})}{t_{i+1} - t_{i}}
        =
        \hbar_i\tp \Qbf \hbar_i - \ellbar_i\tp \ellbar_i + \etabar_i\tp \Bbar_i \ubar_i.
    \end{equation}
    With~\eqref{eq:moylan-qsr-conditions} and~\eqref{eq:discr:qsr-dg-ham-difference}, we obtain 
    \begin{align*}
        & s(\ubar_i, \ybar_i) - \frac{\ham(z_{i+1}) - \ham(z_{i})}{t_{i+1} - t_{i}}
    \\
    = ~ & \ybar_i\tp \Qbf \ybar_i
        + 2 \ybar_i\tp \Sbf \ubar_i
        + \ubar_i\tp \Rbf \ubar_i
        - \hbar_i\tp \Qbf \hbar_i
        + \ellbar_i\tp \ellbar_i
        - \etabar_i\tp \Bbar_i \ubar_i
    \\
    = ~ & 
    (\hbar_i + \Dbar_i \ubar_i)\tp \Qbf (\hbar_i + \Dbar_i \ubar_i) 
    + 2 (\hbar_i + \Dbar_i \ubar_i)\tp \Sbf \ubar_i
    + \ubar_i\tp \Rbf \ubar_i
    \\*
    &
    - \hbar_i\tp \Qbf \hbar_i
    + \ellbar_i\tp \ellbar_i
    - \etabar_i\tp \Bbar_i \ubar_i
    \\
    = ~ & 
    2 \hbar_i\tp (\Qbf \Dbar_i + \Sbf) \ubar_i 
    + \ubar_i\tp (\Rbf + \Dbar_i\tp \Sbf + \Sbf\tp \Dbar_i + \Dbar_i\tp \Qbf \Dbar_i) \ubar_i 
    + \ellbar_i\tp \ellbar_i 
    - \etabar_i\tp \Bbar_i \ubar_i
    \\
    = ~ & 
    2 \ellbar_i\tp \Wbar_i \ubar_i
    + \ubar_i\tp \Wbar_i\tp \Wbar_i \ubar_i 
    + \ellbar_i\tp \ellbar_i
    =
    \norm{\ellbar_i + \Wbar_i \ubar_i}^2,
    \end{align*}
    where in the last line we used 
    \begin{equation*}
        2 \hbar_i\tp (\Qbf \Dbar_i + \Sbf) \ubar_i
        = (\etabar_i\tp \Bbar_i + 2 \ellbar_i\tp \Wbar_i) \ubar_i 
        = \etabar_i\tp \Bbar_i \ubar_i + 2 \ellbar_i\tp \Wbar_i \ubar_i.
    \end{equation*}
    Rearranging yields~\eqref{eq:discr:dg-qsr-powerbalance}.
\end{proof}

\section{Numerical Experiments}\label{sec:numerical_experiments}
In this section, we illustrate our theoretical results using numerical experiments.
We consider four nonlinear systems: an optimal control problem, the nonlinear pendulum, a proportional integral (PI) controller, and a synthetic example.

\subsection{Implementation details}
The details of our implementation are as follows.

\paragraph{Time discretization}
In all numerical experiments, we use equidistant time discretization points~$t_j$.
That is, for~$\ntimesteps \in \mathbb{N}$ we set~$\tau = T/\ntimesteps$ and consider~$t_i = i \tau$ for~$i=0,\dots,\ntimesteps$.
Moreover, for the approximations in \Cref{scheme:dg-qsr}, we choose the implicit midpoint rule for $\fbar$,~$\Bbar$,~$\Dbar$,~$\ellbar$,~$\Wbar$, the Gonzalez discrete gradient for $\etabar$, and the trapezoidal rule for $\ubar$.

\paragraph{Nonlinear systems}
To solve the arising nonlinear systems, we use Newton's method.
The derivatives required are computed automatically using \textsf{JAX}~\cite{bradbury18-jax} and the linear systems are solved using \textsf{Lineax}~\cite{rader23-lineax}.
In all but the first time step, the numerical solution of the previous time step is used as the starting value.
We perform ten Newton steps, which leads to a sufficiently small residual for all our examples.

\paragraph{Convergence}
For the convergence analysis, we set~$\tau_{\min} \coloneq 10^{-3}$ and pick the other tested stepsizes~$\tau$ as~$2^s \tau_{\min}$ for some~$s \in \NN$.
A~reference solution $z_{\text{ref}}$ is computed with the implicit midpoint method~\eqref{eq:discr:qsr-implicit-midpoint} using the stepsize~$2^{-3} \tau_\text{min}$.
For the computation of the reference solution, the control $u(\frac{t_{i} + t_{i+1}}{2})$ is approximated by $\frac{u(t_{i}) + u(t_{i+1})}{2}$.
We then compute the relative error 
\begin{equation}\label{eq:relative-error}
    \errorstatenodal(\tau)
    \coloneq
    \frac
    {\max_{i} \norm{z_{\text{ref}}(t_{i}) - z_{i}}}
    {\max_{j} \norm{z_{\text{ref}}(t_{j})}}.
\end{equation}

\paragraph{Power balance}
To numerically verify the time-discrete power balance~\eqref{eq:discr:dg-qsr-powerbalance}, we compute the solution of \Cref{scheme:dg-qsr} using $\ntimesteps+1 = 1001$ time points and visualize the absolute error 
\begin{equation}\label{eq:dg-qsr-aboluste-power-balance-error}
    \errorenergyabs(t_i)
    \coloneqq
        \left|
            \frac{\ham(z_{i+1}) - \ham(z_{i})}{t_{i+1} - t_{i}}
            + 
            \norm{\ellbar(z_i, z_{i+1}) + \Wbar(z_i, z_{i+1}) \ubar_i}^2
            - 
            s(\ubar_i, \ybar_i)
        \right|.
\end{equation}
Here, we choose to compute the absolute error in the power balance~\eqref{eq:discr:dg-qsr-powerbalance} since one of our examples is a conservative system and thus the denominator $\tfrac{\ham(z_{i+1}) - \ham(z_{i})}{t_{i+1} - t_{i}}$ is close to machine precision and deteriorates the results.

\subsection{Examples}

Our first example is the well-known nonlinear pendulum, see e.g.~\cite[Sec.~3.3]{baker05-pendulum}.

\begin{example}[pendulum]\label{ex:pendulum}
    Denote the angular displacement of the pendulum by~$\theta \in \RR$ and consider
    \begin{equation*}
        \ddot \theta = - g \sin(\theta) - \lambda \dot\theta + u,
    \end{equation*}
    where~$g \in \RR$ is the gravitational constant,~$\lambda \geq 0$ is a friction coefficient and~$u\in C([0,T])$,~$T > 0$ is a control input influencing the acceleration.
    With~$z(t) = (z_1(t), z_2(t)) = (\theta(t), \dot\theta(t)) \in \RR^2$, the system is associated with the energy 
    \begin{equation*}
        \ham(z) = g (1 - \cos(z_1)) + \tfrac12 z_2^2
    \end{equation*}
    and can be expressed as the \portHamiltonian~\cite{vanderschaft14-port} system
    \begin{equation}\label{eq:pendulum}
        \dot{z}
        =
        \vec{
            z_2 \\
            - g \sin(z_1) - \lambda z_2 + u
        }
        =
        (J - R) \eta(z) + B u,
    \end{equation}
    where 
    \begin{equation*}
        J = \vec{0 & 1 \\ -1 & 0} \in \RR^{2,2}
        ,~~
        R = \vec{0 & 0 \\ 0 & \lambda} \in \RR^{2,2}
        ,~~
        B = \vec{0 \\ 1} \in \RR^{2,1}.
    \end{equation*}
    The collocated output of the system is the velocity~$y = B\tp \eta(z) = z_2$.
    In particular, the system is passive.
    Moreover, the system is dissipative w.r.t.~the quadratic supply rate~\eqref{eq:hill-moylan-qsr-supply} defined by~$\Qbf = - \lambda$,~$\Sbf = \frac12$ and~$\Rbf = 0$.
    For this choice of the supply rate, the function~$\ell$ from~\eqref{eq:moylan-qsr-conditions} is given by~$\ell = 0$.
    Further, since~$\D = \Rbf = 0$ in~\eqref{eq:moylan-system}, we have~$W = 0$ in~\eqref{eq:moylan-qsr-conditions}.
\end{example}

The next example is based on an optimal control problem and is taken from~\cite[Section 3.5]{vanderschaft17-l2gain}.

\begin{example}[{optimal control problem,~\cite[Section 3.5]{vanderschaft17-l2gain}}]\label{ex:qsr-hjb}
    Consider system~\eqref{eq:moylan-system} with~$\D = 0$ and assume that the value function~$\hamc$ defined by
    \begin{equation}\label{eq:qsr-ocp}
        \hamc(z_0) \coloneq \inf_{u \in \Uad(z_0)} \left\{ \frac{1}{2} \int_{0}^{\infty} \norm{y}^2 + \norm{u}^2 \dt \right\}
    \end{equation}
    subject to~\eqref{eq:moylan-system} is continuously differentiable.
    Here, we consider 
    \begin{equation*}
        \Uad(z_0) \coloneq \{ u \in \lebesgue^{2}([0,\infty), \RR^m) ~|~ \text{$u$ renders the integral in~\eqref{eq:qsr-ocp} finite}\}.
    \end{equation*}
    The Hamilton--Jacobi--Bellman equation~\cite{crandall83-viscosity} associated with the optimal control problem then reads
    \begin{equation*}
        \etac(z)\tp f(z) - \tfrac12 \norm{\B(z)\tp \etac(z)}^2 + \tfrac12 \norm{y}^2 = 0.
    \end{equation*}
    In particular, if we consider the output~$\yc \coloneq \B(z)\tp \etac(z)$, we see that the dynamics are dissipative with respect to the supply rate
    \begin{equation*}
        s(u, \yc) = \tfrac12 \norm{\yc}^2 + \yc\tp u = \yc\tp \Qbf \yc + 2 \yc\tp \Sbf u,
    \end{equation*}
    where we set~$\Qbf = \Sbf = \tfrac12 I_m$.
    For this example, the storage function is~$\hamc$ and the function~$\ell$ from~\eqref{eq:moylan-qsr-conditions} is given by~$\ell(z) = \frac{1}{\sqrt{2}} y =  \frac{1}{\sqrt{2}} h(z)$.
    Further, since~$\D = \Rbf = 0$, we have~$W = 0$ in~\eqref{eq:moylan-qsr-conditions}.
\end{example}

The next example is the \emph{proportional-integral (PI)} controller.
Such controllers are widely used in practice~\cite{astrom93-automatic}.

\begin{example}[PI controller]\label{ex:pi-controller}
    As discussed in~\cite[Example 4.1.4]{vanderschaft17-l2gain}, the proportional-integral controller for the signal~$u\colon [0,T] \to \RR$ described by 
    \begin{equation}\label{eq:pi-controller}
        \begin{aligned}
            \dot{z} & = u, ~~ z(0) = z_0\\
            y & = k_I z + k_P u
        \end{aligned}
    \end{equation}
    with the integral and proportional coefficients~$k_I \geq 0$ and~$k_P \geq 0$, respectively, is dissipative w.r.t.~the quadratic supply rate~\eqref{eq:hill-moylan-qsr-supply} defined by~$\Qbf = 0$,~$\Sbf = \frac12$ and~$\Rbf = - k_P$ with the storage function~$\ham(z) = \tfrac12 k_I z^2$. 
    For this example, we have~$\ell = 0$ and~$W = 0$ in~\eqref{eq:moylan-qsr-conditions}.
\end{example}

Our final example is purely synthetic.
The example is taken from \cite[Case~3 in Section~V]{hill80-connections}, but we changed the sign of $f(z)$ to render the system stable.

\begin{example}[{modified from~\cite{hill80-connections}}]\label{ex:qsr-hill-moylan}
    Consider the one-dimensional system 
    \begin{equation}\label{eq:qsr-hill-moylan-system}
        \begin{aligned}
            \dot{z} & = - z - \frac{\alpha z}{1 + z^4} + 2 \lambda u, ~~ z(0) = z_0 \\
            y & = \frac{\alpha z}{1 + z^4} + \lambda u,
        \end{aligned}
    \end{equation} 
    where~$z_0 \in \RR$ is the initial value and~$\alpha > 0$ and~$\lambda \neq 0$ are parameters.
    For~$\ham(z) = \tfrac{\alpha}{2} \arctan(z^2)$, we have~$\ham'(z) = \tfrac{\alpha z}{1 + z^4}$ and hence along trajectories of~\eqref{eq:qsr-hill-moylan-system} it holds that
    \begin{align*}
        \ham(z(t_1)) - \ham(z(t_0)) 
        & = \int_{t_0}^{t_1} \ham'(z) \dot{z} \dt 
        \\
        & = \int_{t_0}^{t_1} - \frac{\alpha z^2}{1 + z^4} - \frac{\alpha^2 z^2}{(1 + z^4)^2} - \frac{2 \lambda \alpha z u}{1 + z^4} \dt
        \\
        & = \int_{t_0}^{t_1} - \frac{\alpha z^2}{1 + z^4} + \lambda^2 u^2 - y^2 \dt.
    \end{align*}
    In particular, the system~\eqref{eq:qsr-hill-moylan-system} is dissipative w.r.t.~the quadratic supply rate~\eqref{eq:hill-moylan-qsr-supply} defined by~$\Qbf = -1$,~$\Sbf = 0$ and~$\Rbf = \lambda^2$.
    For this example, we have~$\ell(z) = \frac{\sqrt{\alpha} z}{\sqrt{1+z^4}}$ and~$W = 0$ in~\eqref{eq:moylan-qsr-conditions}.
\end{example}

\subsection{Results}
For the numerical experiments, we consider the following settings.
\begin{itemize}
\item
    For the pendulum (\Cref{ex:pendulum}), we choose the parameters~$g=9.81$,~$\lambda = 0.2$, the initial condition~$z_0 = \textvec{\pi/4 \\ -1}$, and the control input~$u(t) = \sin(2t)$.
\item For the infinite-horizon optimal control problem~\eqref{eq:qsr-ocp} (\Cref{ex:qsr-hjb}), we consider~\eqref{eq:moylan-system} to be the linear time-invariant system%
    \footnote{The proposed discrete gradient method also works for~\Cref{ex:qsr-hjb} when~\eqref{eq:moylan-system} is allowed to be nonlinear. However, in that case the convergence of the iterates is deteriorated due to errors in the approximation of~$\etac(z)$. This is why we restrict ourselves to linear systems here.}
    \begin{equation}\label{eq:lti-test}
        \begin{aligned}
            \dot{z} & = A z + B u, ~~ z(0) = z_0, \\
            y & = C z
        \end{aligned}
    \end{equation}
    with~$A = \textvec{0.1 & 1 \\ -1 & 0.1}$,~$B = \textvec{0 \\ 1}$,~$C = \textvec{ 1 & 0}$, and the initial condition~$z_0 = \textvec{ 1 \\ 1 }$.
    \newcommand{\Pc}{P_\mathrm{c}}
    Then the value function~$\hamc$ of~\eqref{eq:qsr-ocp} is given by~$\hamc(z_0) = \tfrac12 z_0\tp \Pc z_0$, where~$\Pc = \Pc\tp \succ 0$ is the stabilizing solution to the algebraic Riccati equation 
    \begin{equation*}
        A\tp \Pc + \Pc A - \Pc B B\tp \Pc + C\tp C = 0.
    \end{equation*}
    Consequently, we have~$\yc = \B(z)\tp \etac(z) = B\tp \Pc z$.
    As the control input, we consider~$u(t) = \sin({t^2}/{4})$.
\item For the PI controller (\Cref{ex:pi-controller}), we choose the parameters~$k_I = k_P = 1$, initial condition~$z_0 = \textvec{ 1 \\ 1}$, and control input~$u(t) = \min(t^2, \exp(-t))$.
\item For the synthetic example (\Cref{ex:qsr-hill-moylan}), we choose the parameters~$\lambda = 1$,~$\alpha = 2$, initial condition~$z_0 = 1$, and control input~$u(t) = e^{-(t-4)^2} + e^{-(t-7)^2}$.
\end{itemize}

In \Cref{fig:all_convergence} the convergence of the solution \Cref{scheme:dg-qsr} is shown for all our examples.
We observe that the scheme is second order accurate in all cases.
In \Cref{fig:all_balances}, the absolute error in~\eqref{eq:discr:dg-qsr-powerbalance} is shown.
We observe that the errors are close to machine precision throughout the time horizon.
The deviation from machine precision can be explained by the fact that an approximate solution of a nonlinear equation system has to be determined in every time step and the corresponding approximation error affects the accuracy of the power balance.

\begin{figure}
    \centering
	\begin{subfigure}[t]{.45\textwidth}
        \centering
        \scalebox{0.48}{\input{all_convergence.pgf}}
        \caption{
            Convergence to the reference solution.
        }
        \label{fig:all_convergence}
    \end{subfigure}
	\hfill
	\begin{subfigure}[t]{.45\textwidth}
		\centering
        \scalebox{0.48}{\input{all_balances.pgf}}
        \caption{
            Absolute error in~\eqref{eq:discr:dg-qsr-powerbalance} over the time horizon.
        }
        \label{fig:all_balances}
	\end{subfigure}
    \caption{
        Numerical results for \Cref{ex:pendulum,ex:qsr-hjb,ex:pi-controller,ex:qsr-hill-moylan}.
    }
\end{figure}

\section{Concluding remarks}
In this paper, we propose a discrete gradient scheme for a special class of QSR-dissipative systems with equal input and output numbers.
In a special case, the new scheme can be viewed as the smallest deviation from the implicit midpoint rule that incorporates discrete gradients and ensures a time-discrete power balance.
We provide sufficient conditions for the scheme to be well-posed and second-order accurate.
Moreover, we illustrate our theoretical results by means of numerical experiments.
As expected, discrete solutions satisfy energy and power balances and the corresponding dissipation inequalities up to the accuracy of the nonlinear system solver.
Furthermore, we show second-order accuracy for all examples.

Topics for future research include extensions to higher-order schemes as well as the extension to more general QSR-dissipative systems with unequal input and output numbers.
Moreover, the structure of QSR-dissipative systems identified on the time-continuous level might also pave the way for the development of structure-preserving model order reduction techniques for large-scale QSR-dissipative systems.

\paragraph{Acknowledgment}
A.\,K. thanks the Deutsche Forschungsgemeinschaft for their support within the subproject B03 in the Sonderforschungsbereich/Transregio 154 “Mathematical Modelling, Simulation and Optimization using the Example of Gas Networks” (Project 239904186).
P.\,S. thanks the Deutsche Forschungsgemeinschaft for their support within the Sonderforschungsbereich 1294 “Data Assimilation -- The Seamless Integration of Data and Models” (Project 318763901).

\paragraph{Code availability}
All custom code used to generate the results in this paper is available in the public \textsf{GitHub} repository \url{https://github.com/akarsai/qsr-discrete-gradients}.

\bibliographystyle{siam}
\bibliography{QSRdissip}

\begin{thebibliography}{10}

\bibitem{arnold89-mathematical}
{\sc V.~I. Arnold}, {\em Mathematical Methods of Classical Mechanics}, Springer
  New York, USA, second~ed., 1989.

\bibitem{astrom93-automatic}
{\sc K.~J. {\AA}ström, T.~Hägglund, C.~C. Hang, and W.~K. Ho}, {\em Automatic
  tuning and adaptation for {PID} controllers - a survey}, Control Engineering
  Practice, 1 (1993), pp.~699--714.

\bibitem{baker05-pendulum}
{\sc G.~L. Baker and J.~A. Blackburn}, {\em The Pendulum: A Case Study in
  Physics}, Oxford University Press, Oxford, UK, 2005.

\bibitem{bradbury18-jax}
{\sc J.~Bradbury, R.~Frostig, P.~Hawkins, M.~J. Johnson, C.~Leary,
  D.~Maclaurin, G.~Necula, A.~Paszke, J.~Vander{P}las, S.~Wanderman-{M}ilne,
  and Q.~Zhang}, {\em {JAX}: composable transformations of {P}ython+{N}um{P}y
  programs}, 2018.

\bibitem{brogliato20-dissipative}
{\sc B.~Brogliato, R.~Lozano, B.~Maschke, and O.~Egeland}, {\em Dissipative
  Systems Analysis and Control: Theory and Applications}, Springer Nature
  Switzerland, Cham, Switzerland, third~ed., 2020.

\bibitem{celledoni12-preserving}
{\sc E.~Celledoni, V.~Grimm, R.~I. McLachlan, D.~I. McLaren, D.~O{'}Neale,
  B.~Owren, and G.~R.~W. Quispel}, {\em Preserving energy resp.~dissipation in
  numerical {PDE}s using the ``average vector field'' method}, Journal of
  Computational Physics, 231 (2012), pp.~6770--6789.

\bibitem{celledoni17-energy}
{\sc E.~Celledoni and E.~H. H{\o}iseth}, {\em Energy-preserving and
  passivity-consistent numerical discretization of port-{Hamiltonian} systems},
  arXiv preprint {1706.08621},  (2017).

\bibitem{crandall83-viscosity}
{\sc M.~G. Crandall and P.-L. Lions}, {\em Viscosity solutions of
  {Hamilton}-{Jacobi} equations}, Transactions of the American Mathematical
  Society, 277 (1983), pp.~1--42.

\bibitem{deuflhard02-scientific}
{\sc P.~Deuflhard and F.~Bornemann}, {\em Scientific Computing with Ordinary
  Differential Equations}, Springer, New York, NY, USA, 2002.

\bibitem{egger21-energy}
{\sc H.~Egger, O.~Habrich, and V.~Shashkov}, {\em On the energy stable
  approximation of {H}amiltonian and gradient systems}, J. Comput. Methods
  Appl. Math., 21 (2021), pp.~335--349.

\bibitem{ehrhardt25-geometric}
{\sc M.~J. Ehrhardt, E.~S. Riis, T.~Ringholm, and C.-B. Schönlieb}, {\em A
  geometric integration approach to smooth optimization: foundations of the
  discrete gradient method}, {IMA} Journal of Numerical Analysis, 45 (2025),
  pp.~1269--1299.

\bibitem{eidnes22-order}
{\sc S.~Eidnes}, {\em Order theory for discrete gradient methods}, BIT
  Numerical Mathematics, 62 (2022), pp.~1207--1255.

\bibitem{giesselmann25-energy-consistent}
{\sc J.~Giesselmann, A.~Karsai, and T.~Tscherpel}, {\em Energy-consistent
  {P}etrov--{G}alerkin time discretization of port-{H}amiltonian systems}, SMAI
  J. Comput. Math., 11 (2025), pp.~335--367.

\bibitem{gonzalez96-time}
{\sc O.~Gonzalez}, {\em Time integration and discrete {Hamiltonian} systems},
  Journal of Nonlinear Science, 6 (1996), pp.~449--467.

\bibitem{grmela97-dynamics}
{\sc M.~Grmela and H.~C. \"Ottinger}, {\em Dynamics and thermodynamics of
  complex fluids. {I}. {D}evelopment of a general formalism}, Phys. Rev. E, 56
  (1997), pp.~6620--6632.

\bibitem{hairer06-geometric}
{\sc E.~Hairer, G.~Wanner, and C.~Lubich}, {\em Geometric Numerical
  Integration}, Springer, Berlin, Germany, 2006.

\bibitem{hairer93-solving}
{\sc E.~Hairer, G.~Wanner, and S.~P. N{\o}rsett}, {\em Solving Ordinary
  Differential Equations I}, Springer, Berlin, G., 1993.

\bibitem{harten83-upstream}
{\sc A.~Harten, P.~D. Lax, and B.~van Leer}, {\em On upstream differencing and
  {Godunov}-type schemes for hyperbolic conservation laws}, SIAM Review, 25
  (1983), pp.~35--61.

\bibitem{hill80-connections}
{\sc D.~Hill and P.~Moylan}, {\em Connections between finite-gain and
  asymptotic stability}, IEEE Transactions on Automatic Control, 25 (1980),
  pp.~931--936.

\bibitem{hill75-cyclo}
{\sc D.~J. Hill and P.~J. Moylan}, {\em Cyclo-dissipativeness, dissipativeness,
  and losslessness for nonlinear dynamical systems}, Technical Report No.
  EE7526,  (1975).

\bibitem{hirsch74-differential}
{\sc M.~W. Hirsch and S.~Smale}, {\em Differential Equations, Dynamical
  Systems, and Linear Algebra}, Academic Press, New York, NY, USA, 1974.

\bibitem{itoh88-hamiltonian}
{\sc T.~Itoh and K.~Abe}, {\em Hamiltonian-conserving discrete canonical
  equations based on variational difference quotients}, Journal of
  Computational Physics, 76 (1988), pp.~85--102.

\bibitem{juengel19-structure-preserving}
{\sc A.~J\"ungel, U.~Stefanelli, and L.~Trussardi}, {\em Two
  structure-preserving time discretizations for gradient flows}, Appl. Math.
  Optim., 80 (2019), pp.~733--764.

\bibitem{juengel22-minimizing-movements}
\leavevmode\vrule height 2pt depth -1.6pt width 23pt, {\em A
  minimizing-movements approach to {GENERIC} systems}, Math. Eng., 4 (2022),
  pp.~1--18.

\bibitem{kemmochi23-higher}
{\sc T.~Kemmochi}, {\em Higher order discrete gradient method by the
  discontinuous {G}alerkin time-stepping method}, ArXiv preprint 2308.02334v1,
  2023.

\bibitem{kinon26-discrete}
{\sc P.~L. Kinon, R.~Morandin, and P.~Schulze}, {\em Discrete gradient methods
  for port-{H}amiltonian differential-algebraic equations}, Appl. Numer. Math.,
  223 (2026), pp.~45--75.

\bibitem{kotyczka19-numerical}
{\sc P.~Kotyczka}, {\em Numerical Methods for Distributed Parameter
  Port-Hamiltonian Systems}, TUM.University Press, Munich, Germany, 2019.

\bibitem{kotyczka19-discrete-time}
{\sc P.~Kotyczka and L.~Lef\`{e}vre}, {\em Discrete-time port-{H}amiltonian
  systems: a definition based on symplectic integration}, Systems Control
  Lett., 133 (2019), p.~104530.

\bibitem{mclachlan99-geometric}
{\sc R.~I. McLachlan, G.~R.~W. Quispel, and N.~Robidoux}, {\em Geometric
  integration using discrete gradients}, Philosophical Transactions of the
  Royal Society of London. Series {A}: Mathematical, Physical and Engineering
  Sciences, 357 (1999), pp.~1021--1045.

\bibitem{mehrmann19-structure-preserving}
{\sc V.~Mehrmann and R.~Morandin}, {\em Structure-preserving discretization for
  port-{H}amiltonian descriptor systems}, in Proceedings of the 58th IEEE
  Conference on Decision and Control, Nice, France, 2019, pp.~6863--6868.

\bibitem{morandin24-modeling}
{\sc R.~Morandin}, {\em Modeling and numerical treatment of port-{H}amiltonian
  descriptor systems}, PhD thesis, Technische Universit\"at Berlin, Germany,
  2024.

\bibitem{norton14-discrete}
{\sc R.~A. Norton and G.~R.~W. Quispel}, {\em Discrete gradient methods for
  preserving a first integral of an ordinary differential equation}, Discrete
  Contin. Dyn. Syst., 34 (2014), pp.~1147--1170.

\bibitem{ortega02-interconnection}
{\sc R.~Ortega, A.~van~der Schaft, B.~Maschke, and G.~Escobar}, {\em
  Interconnection and damping assignment passivity-based control of
  port-controlled {H}amiltonian systems}, Automatica, 38 (2002), pp.~585--596.

\bibitem{oettinger18-generic}
{\sc H.~C. \"Ottinger}, {\em {GENERIC} integrators: structure preserving time
  integration for thermodynamic systems}, J. Non-Equil. Thermody., 43 (2018),
  pp.~89--100.

\bibitem{oettinger97-dynamics}
{\sc H.~C. \"Ottinger and M.~Grmela}, {\em Dynamics and thermodynamics of
  complex fluids. {II}. {I}llustrations of a general formalism}, Phys. Rev. E,
  56 (1997), pp.~6633--6655.

\bibitem{rader23-lineax}
{\sc J.~Rader, T.~Lyons, and P.~Kidger}, {\em {Lineax}: unified linear solves
  and linear least-squares in {JAX} and {Equinox}}, AI for science workshop at
  Neural Information Processing Systems 2023, arXiv:2311.17283,  (2023).

\bibitem{schulze23-structure2}
{\sc P.~Schulze}, {\em Structure-preserving time discretization of
  port-{Hamiltonian} systems via discrete gradient pairs}, arXiv preprint
  {2311.00403},  (2023).

\bibitem{vanderschaft17-l2gain}
{\sc A.~{Van der Schaft}}, {\em $L^2$-Gain and Passivity Techniques in
  Nonlinear Control}, vol.~2, Springer International Publishing, 2017.

\bibitem{vanderschaft14-port}
{\sc A.~J. {Van der Schaft} and D.~Jeltsema}, {\em {Port-Hamiltonian} systems
  theory: An introductory overview}, Foundations and Trends in Systems and
  Control, 1 (2014), pp.~173--378.

\bibitem{wang03-generalized}
{\sc Y.~Wang, C.~Li, and D.~Cheng}, {\em Generalized {Hamiltonian} realization
  of time-invariant nonlinear systems}, Automatica, 39 (2003), pp.~1437--1443.

\bibitem{willems72-dissipative1}
{\sc J.~Willems}, {\em Dissipative dynamical systems part {I}: general theory},
  Archive for Rational Mechanics and Analysis, 45 (1972), pp.~321--351.

\bibitem{zeidler91-applied}
{\sc E.~Zeidler}, {\em Applied functional analysis: main principles and their
  applications}, vol.~109, Springer, New York, NY, USA, 1991.

\end{thebibliography}

\end{document}